\documentclass[letterpaper,11pt,reqno]{amsart}

\makeatletter
\usepackage{amssymb}
\usepackage{latexsym}
\usepackage{amsbsy}
\usepackage{amsfonts}
\usepackage{hyperref}
\usepackage{graphicx}
\usepackage{enumerate}
\usepackage{enumitem}
\usepackage{mathtools}
\usepackage{color}

\usepackage{tikz}

\def\marginpar#1{\ignorespaces}

\newtheorem{theorem}{Theorem}[section]

\numberwithin{equation}{section}
\makeatother
\begin{document}
\title[Occupation times]{The occupation time of a random walk generated by the uniform random permutation \\
-- a \texttt{GPT-6 Astra} proof}

\author[Wenpin Tang]{{Wenpin} Tang}
\address{Department of Industrial Engineering and Operations Research, Columbia University. 
} \email{wt2319@columbia.edu}

\date{\today} 
\begin{abstract}
This note provide a proof of Conjecture 2.5 in Fang et al. (J. Appl. Probab., 58(4):851--867, 2021),
with the help of \texttt{GPT-6 Astra}.
We show that the number of edges lying above zero of a random walk generated from the uniform permutation
has the discrete arcsine law.
The proof is short 
but hinges on an unexpected representation.
We also make several comments on solving the problem using large language models.
\end{abstract}

\maketitle

\textit{Key words}: Arcsine law, GPT-6 Astra, occupation time, random walk, uniform permutation. 

\bigskip
\quad The purpose of this note is to record a proof of Conjecture 2.5 in \cite{FG21},
thanks to \texttt{GPT-6 Astra}. 

\quad The problem is concerned with the distribution of the occupation time of a permutation generated random walk,
a model motivated by applications in genomics. \cite{WL18, WWH14}.
The problem and the subsequent paper \cite{FG21} grew out of the AIM workshop
 \url{https://aimath.org/pastworkshops/steinhd.html}.
 
\quad Let's describe the problem: let $\pi:=(\pi_1, \ldots, \pi_{n+1})$
be a permutation of $[n+1]: = \{1,\ldots, n+1\}$. 
Let 
\begin{equation*}
X_k = 
\left\{ \begin{array}{rcl}
+1 & \mbox{if } \pi_k < \pi_{k+1}, \\ 
-1 & \mbox{if }  \pi_k > \pi_{k+1},
\end{array}\right.
\end{equation*}
and denote by $S_n: = \sum_{k = 1}^n X_k$, $S_0:=0$
the corresponding walk generated by $\pi$.
An obvious candidate for $\pi$ is the uniform random permutation of $[n+1]$.
This random walk model also appeared in the physics literature \cite{OV04},
and in the study of zigzag diagrams \cite{GO06}.

\quad One intriguing quantity related to the walk $(S_k, \, 0 \le k \le n)$ is the occupation time above zero:
\begin{equation}
\label{eq:Nn}
N_n:= \sum_{k = 1}^n 1(S_{k-1} \ge 0, \, S_k \ge 0);
\end{equation}
that is, the number of edges which lie above zero up to time $n$.
The following result (now a theorem) was conjectured in \cite{FG21}.
\begin{theorem}
\label{thm:1}
For $\pi$ a uniform permutation of $[2n+1]$,
let $(S_k, \, 0 \le k \le 2n)$ be generated from $\pi$.
The distribution of $N_{2n}$ defined by \eqref{eq:Nn} is given by
\begin{equation}
\label{eq:arcsine}
\mathbb{P}(N_{2n} = 2k) = \frac{1}{2^{2n}} \binom{2k}{k} \binom{2n-2k}{n-k}
\quad \mbox{for } k =0,\ldots, n;
\end{equation}
that is, the discrete arcsine law. 
\end{theorem}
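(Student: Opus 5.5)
The approach is to replace the uniform permutation by i.i.d.\ uniform variables and study a Markov chain. Let $U_1,\ldots,U_{2n+1}$ be i.i.d.\ uniform on $[0,1]$. Their relative order is a uniform permutation of $[2n+1]$, so we may take $X_k=\operatorname{sgn}(U_{k+1}-U_k)$. The pair $Z_k:=(S_k,U_{k+1})$ is then a time-homogeneous Markov chain on $\mathbb{Z}\times[0,1]$. From $(s,u)$ it jumps to $(s+1,v)$ with density $1(v>u)$ and to $(s-1,v)$ with density $1(v<u)$.

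The goal is to mimic the classical proof of the discrete arcsine law for simple random walk. That proof decomposes the path at its visits to $0$ and uses only two facts about excursions:
\begin{itemize}
\item excursions above and below zero are equally likely;
\item the excursion and return-time generating functions have the Catalan form.
\end{itemize}
Here the role of the "state at a zero" is played by the uniform level $u$ seen when the walk is at $0$. So we must work with operators on $L^2[0,1]$ rather than scalars.

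\textbf{Step 1 (operators and symmetry).} For $|x|<1$, define the first-return operator $F_+(x)$ on functions of $u$. It is the sub-Markov kernel of the chain started at $(0,u)$, killed at its first return to height $0$ after a strictly positive excursion, weighted by $x^{\text{length}}$, and recording the new level. Define $F_-(x)$ similarly for negative excursions. Also define the operators $G_\pm(x)$ for excursions that have not yet returned by the final time.

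The map $u\mapsto 1-u$ sends $X_k\mapsto -X_k$ and preserves the uniform law. Hence $F_-=RF_+R$ and $G_-=RG_+R$ with $(Rf)(u)=f(1-u)$.

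Decomposing at the successive zeros then gives the bivariate generating function of $(N_{2n},\,2n-N_{2n})$. It equals
\[
\Phi(x,y)=\Big\langle \mathbf 1,\;\big(I-F_+(x)-F_-(y)\big)^{-1}\big(I+G_+(x)+G_-(y)\big)\,\mathbf 1\Big\rangle ,
\]
with the appropriate initial density at time $0$; the walk "starts" with $U_1$ uniform.

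\textbf{Step 2 (target identity).} By \eqref{eq:arcsine}, and because $\sum_n\binom{2n}{n}t^n/4^n=(1-t)^{-1/2}$, Theorem~\ref{thm:1} is equivalent to
\[
\Phi(x,y)=\big((1-x^2)(1-y^2)\big)^{-1/2},
\]
after the obvious change of variables. Parity is handled by noting that $S_{2n}$ is even, so edges come in the right counts.

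Thus it suffices to show two things:
\begin{itemize}
\item the one-sided quantities factor, i.e.\ the chain "forgets" $u$ at zeros in an averaged sense;
\item the scalar $\langle\mathbf 1,(I-F_+(x))^{-1}(I+G_+(x))\mathbf 1\rangle$ equals $(1-x^2)^{-1/2}$.
\end{itemize}
The latter is exactly the statement that the number of permutations of $[2n+1]$ whose walk stays $\ge 0$ (counting appropriately) is $\binom{2n}{n}(2n+1)!/4^n$. This is consistent with the known results in \cite{FG21}.

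\textbf{Step 3 (main obstacle: exact factorization).} The operators $F_+(x)$ and $F_-(y)$ do not commute, and the level $u$ at a zero genuinely depends on the past. So the obstacle is to find a representation that makes the law of the sequence of excursion signs and lengths, ignoring $u$, identical to that of simple random walk.

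What I would try first is a Sparre Andersen / cyclic-type argument on the uniforms. I would look for a measure-preserving bijection of $[0,1]^{2n+1}$ that maps $N_{2n}$ to a quantity with an explicitly computable law. One candidate is a cyclic rotation of the values $U_i\mapsto U_i+c \bmod 1$ combined with time-reversal $k\mapsto 2n+2-k$, which maps $S$ to $-S$ reversed. Such a bijection would, for instance, equate $N_{2n}$ in law with the index of the last zero, or with the time of the maximum, of the walk.

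If this works, the problem reduces to computing the law of a simpler functional. I expect that functional to have the product form $\binom{2k}{k}\binom{2n-2k}{n-k}/4^n$, via the product of the two one-sided counts from Step 2, as in the classical proof. Verifying this product form, equivalently the operator factorization, is the step I expect to be hardest. If the bijection fails, my fallback is to compute $(I-F_+(x))^{-1}$ explicitly. The kernels are piecewise polynomial (Volterra-type, like the $\sec+\tan$ computation for alternating permutations), so a direct ODE in $u$ may be solvable.
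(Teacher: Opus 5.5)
Your proposal is not a proof. The decisive step is left open, and the framework you build around it cannot supply it.

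\textbf{The gap.} Steps 1--2 only reformulate the claim. Setting $y=0$ in $\Phi$ gives exactly your one-sided scalar, so the identity $\langle \mathbf 1,(I-F_+(x))^{-1}(I+G_+(x))\mathbf 1\rangle=(1-x^2)^{-1/2}$, together with the reflection $F_-=RF_+R$, yields only $\mathbb{P}(N_{2n}=2n)$ and $\mathbb{P}(N_{2n}=0)$. These are the boundary cases $k=0,n$, which were already known. Because $F_+(x)$ and $F_-(y)$ do not commute and the level $u$ at a zero depends on the past, nothing in Steps 1--2 gets from these boundary facts to $\Phi(x,y)=((1-x^2)(1-y^2))^{-1/2}$. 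The ``factorization in an averaged sense'' you need is precisely the content of the theorem.

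Step 3 offers only candidates for proving it. The rotation/time-reversal bijection is never specified or checked. Even if it equated $N_{2n}$ in law with the last zero or the argmax of $S$, the laws of those functionals for the permutation walk are no more accessible. The classical product formulas for them use the Markov property at a zero or at the maximum, and that is exactly what fails here because the state also carries $u$. The fallback would not suffice either: you would have to compute the two-variable resolvent $(I-F_+(x)-F_-(y))^{-1}$, not just $(I-F_+(x))^{-1}$.

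\textbf{The missing idea.} Avoid decomposing at the zeros of $S$ altogether.
\begin{enumerate}
\item Make your parity remark precise. Since $S_{2k-1}$ is odd, the two edges adjacent to time $2k-1$ are either both above zero or both below. Hence, pathwise, $N_{2n}=2\,\#\{1\le k\le n: S_{2k-1}>0\}$.
\item Pass to circular increments: $V_1=U_1$ and $V_k=\{U_k-U_{k-1}\}$ for $k\ge 2$. These are i.i.d.\ uniform on $(0,1)$.
\item They satisfy $V_1+\cdots+V_{k+1}=U_{k+1}+D_k$, where $D_k$ is the number of descents among the first $k$ steps. Therefore $D_k=\lfloor V_1+\cdots+V_{k+1}\rfloor$ and $S_k=k-2D_k$, which gives
\[
S_{2k-1}>0 \iff T_k:=\sum_{i=1}^{k}\bigl(1-V_{2i-1}-V_{2i}\bigr)>0 .
\]
\item The walk $T$ has i.i.d., continuous, symmetric increments. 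The Sparre Andersen theorem then gives the discrete arcsine law for $\#\{k\le n: T_k>0\}$ directly.
\end{enumerate}
Your rotation $U_i\mapsto U_i+c \bmod 1$ was pointing at exactly this circular structure, since it leaves $V_2,V_3,\dots$ invariant. You used it as a symmetry to combine with time reversal. The paper instead uses it as a change of variables that turns the descent count into the integer part of an i.i.d.\ sum.
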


\quad Before proving Theorem \ref{thm:1}, let's make a few comments.
\begin{enumerate}[itemsep = 3 pt]
\item
The reason why we believed the formula \eqref{eq:arcsine} {\em should be} true is because
(a) it holds for all small cases from computer enumerations;
(b) the boundary case $k = 0, n$ is known via a non-trivial bijection \cite{BDN10}.
\item
As will be seen, the proof of Theorem \ref{thm:1} culminates an ``aha" moment. 
(It is short, and is eventually an exercise of good tricks.)
Our failure to find a proof 
was partly due to our firm insistence on a bijective proof in the spirit of \cite{BDN10},
``misled" by the boundary case.
\texttt{GPT-6 Astra} (Medium) was also not able to produce such a (bijective) proof.
\item
Theorem \ref{thm:1} is one of the three conjectures that I constantly (since \texttt{GPT-4}) used to test the reasoning
ability of large language models.
The other two are on one-dependent processes \cite{HL15},
 and Poisson binomial distributions \cite{GLP17, TT23}.
 All these conjectures are probabilistic/combinatorics,
 which I believe are true.
 To the date, only Theorem \ref{thm:1} has been resolved with \texttt{GPT-6 Astra}.
 (All the previous GPT versions also failed to prove Theorem \ref{thm:1}.)
\end{enumerate}

\quad Below is the \texttt{GPT-6 Astra}'s (1m51s) proof,
which is hard to beat.
\begin{proof}[Proof of Theorem \ref{thm:1}]
Since $(S_k, \, 0 \le k \le n)$ is a simple walk (with $\pm 1$ increments),
the values of $S_{2k-1}$ are odd ($\ne 0$).  
So the edges adjacent to $S_{2k-1}$, 
i.e., $(S_{2k-2}, S_{2k-1})$ and $(S_{2k-1}, S_{2k})$,
lie above zero if and only if $S_{2k-1} > 0$.
As a result,
\begin{equation}
\label{eq:01}
N_{2n} = 2 \sum_{k =1}^n 1(S_{2k-1} > 0),
\end{equation}
which holds pathwise.

\quad Next, a uniform random permutation can be constructed via i.i.d. uniform$(0, 1)$ random variables.
Let $U_1, \ldots, U_{2n+1} \sim \mbox{Unifom}(0,1)$, 
and sort these numbers in increasing order $U_{(1)} < \cdots < U_{(n)}$.
Then the permutation $\pi$ defined by
\begin{equation*}
\pi_k:= 1 + \sum_{j = 1}^{2n+1} 1(U_j < U_k), \quad k = 1, \ldots, 2n+1,
\end{equation*} 
(i.e., $U_{\pi_k} = U_{(k)}$)
 is uniform
(see e.g., \cite[Section 3.4.2]{Knuth}).
We have:
\begin{equation}
\label{eq:05}
X_k = 1 - 2 1(U_k > U_{k+1}) \quad \mbox{and hence}, \quad
S_k = k - 2 \sum_{j = 1}^k 1(U_{j} > U_{j+1}).
\end{equation}

\quad Now comes the ``aha" moment. 
Define 
\begin{equation*}
V_1 = U_1, \quad V_k = \{U_k - U_{k-1}\} \, \mbox{for } k \ge 2,
\end{equation*}
where $\{x\}$ denotes the fraction part of $x$.
The random variables $V_k$ are i.i.d. $\mbox{Unifom}(0,1)$.
Observe that
\begin{equation*}
V_{j+1} = U_{j+1} - U_j + 1(U_{j} > U_{j+1}),
\end{equation*}
so 
\begin{equation*}
\sum_{j = 1}^{k+1} V_j = U_{k+1} + \sum_{j = 1}^k 1(U_{j} > U_{j+1}).
\end{equation*}
Therefore, 
$\sum_{j = 1}^k 1(U_{j} > U_{j+1}) = \left[ \sum_{j = 1}^{k+1} V_j\right]$,
where $[x]$ denotes the integer part of $x$.
By \eqref{eq:05},
\begin{equation*}
S_k = k - 2 \left[ \sum_{j = 1}^{k+1} V_j\right].
\end{equation*}
So
\begin{equation}
\label{eq:06}
S_{2k-1} > 0 
\Longleftrightarrow
\sum_{j = 1}^{2k} V_j < k.
\end{equation}

\quad Finally, define the ``new" increments and the walk:
\begin{equation}
Y_k:= 1 - V_{2k-1} - V_{2k} \quad \mbox{and} \quad 
T_k:=\sum_{j = 1}^k Y_k, \quad \mbox{for } k = 1,\ldots,n.
\end{equation}
By \eqref{eq:01} and \eqref{eq:06},
we get:
\begin{equation}
N_{2n} = 2 \sum_{k=1}^n 1(T_k > 0)
= 2 \#\{k: T_k > 0\}.
\end{equation}
Note that the increments $Y_k$'s
are  i.i.d., continuous, and symmetric with respect to $0$.
By the classical Sparre–Anderson theorem 
(see e.g., \cite[Chapter XII.7]{Fellervol2}),
$\#\{k: T_k > 0\}$ has the discrete arcsine law,
which allows us to conclude.
\end{proof}

\bigskip
{\bf Acknowledgement:}
This research is supported by NSF CAREER Award DMS-2538791
and
NSF MFAI Award DMS-2602038.

\bibliographystyle{abbrv}
\bibliography{unique}
\end{document}